\newcommand{\comment}[1]{}
\newif\ifpdf
\newtheorem{thm}{Theorem}[section]
\newtheorem{observation}[thm]{Observation}
\newtheorem{corollary}[thm]{Corollary}
\newtheorem{lemma}[thm]{Lemma}
\newtheorem{definition}[thm]{Definition}
\newtheorem{theorem}[thm]{Theorem}
\begin{document}

\title{Total domination in cubic Kn\"odel graphs}
\author{{\small D.A. Mojdeh$^{a}$, S.R. Musawi}$^{b}$, {\small E. Nazari$^{b}$} and {\small N. Jafari Rad$^{c}$}\\
\\{\small $^{a}$Department of
Mathematics, University of Mazandaran,}\\ {\small Babolsar, Iran}\\
{\small Email: damojdeh@yahoo.com}\\
{\small $^{b}$Department of Mathematics, University of Tafresh,}\\ {\small Tafresh, Iran}\\
{\small $^{c}$Department of Mathematics, Shahrood University of Technology,}\\ {\small Shahrood, Iran}\\
{\small Email: n.jafarirad@gmail.com}}
\date{}
\maketitle

\begin{abstract}
A subset $D$ of vertices of a graph $G$ is a \textit{dominating
set} if for each $u\in V(G) \setminus D$, $u$ is adjacent to some
vertex $v\in D$. The \textit{domination number}, $\gamma(G)$ of
$G$, is the minimum cardinality of a dominating set of $G$. A set
$D\subseteq V(G)$ is a \textit{total dominating set} if for each
$u\in V(G)$, $u$ is adjacent to some vertex $v\in D$. The
\textit{total domination number}, $\gamma_{t}(G)$ of $G$, is the
minimum cardinality of a total dominating set of $G$. For an even
integer $n\ge 2$ and $1\le \Delta \le \lfloor\log_2n\rfloor$, a
\textit{Kn\"odel graph} $W_{\Delta,n}$ is a $\Delta$-regular
bipartite graph of even order $n$, with vertices $(i,j)$, for
$i=1,2$ and $0\le j\le n/2-1$, where for every $j$, $0\le j\le
n/2-1$, there is an edge between vertex $(1, j)$ and every vertex
$(2,(j+2^k-1)$ mod (n/2)), for $k=0,1,\cdots,\Delta-1$. In this
paper, we determine the total domination number in $3$-regular
Kn\"odel graphs $W_{3,n}$.

\textbf{Keywords:} Kn\"odel graph, domination number, total domination number, Pigeonhole Principle.
\textbf{Mathematics Subject Classification [2010]:} 05C69, 05C30
\end{abstract}

\section{introduction}
For graph theory notation and terminology not given here, we refer
to \cite{hhs}. Let $G=(V,E)$ denote a simple graph of order
$n=|V(G)|$ and size $m=|E(G)|$. Two vertices $u,v\in V(G)$ are
\textit{adjacent} if $uv\in E(G)$. The \textit{open neighborhood}
of a vertex $u\in V(G)$ is denoted by $N(u)=\{v\in V(G) | uv\in
E(G)\}$ and for a vertex set $S\subseteq V(G)$,
$N(S)=\underset{u\in S}{\cup}N(u)$. The cardinality of $N(u)$ is
called the \textit{degree} of $u$ and is denoted by $\deg(u)$,
(or $\deg_G(u)$ to refer it to $G$). The \textit{maximum degree}
and \textit{minimum degree} among all vertices in $G$ are denoted
by $\Delta(G)$ and $\delta(G)$, respectively. A graph $G$ is a
\textit{bipartite graph} if its vertex set can partition to two
disjoint sets $X$ and $Y$ such that each edge in $E(G)$ connects
a vertex in $X$ with a vertex in $Y$. A set $D\subseteq V(G)$ is
a \textit{dominating set} if for each $u\in V(G) \setminus D$,
$u$ is adjacent to some vertex $v\in D$. The \textit{domination
number}, $\gamma(G)$ of $G$, is the minimum cardinality of a
dominating set of $G$. A set $D\subseteq V(G)$ is a\textit{ total
dominating set} if for each $u\in V(G)$, $u$ is adjacent to some
vertex $v\in D$. The \textit{total domination number},
$\gamma_{t}(G)$ of $G$, is the minimum cardinality of a total
dominating set of $G$. The concept of domination theory is a
widely studied concept in graph theory and for a comprehensive
study see, for example \cite{hhs,hy}.

An interesting family of graphs namely \textit{Kn\"odel graphs}
have been introduced about 1975 \cite{k}, and have been studied
seriously by some authors since 2001, see for example
\cite{bhlp,fr0,fr00,fr}. For an even integer $n\ge 2$ and $1\le
\Delta \le \lfloor\log_2n\rfloor$, a \textit{Kn\"odel graph}
$W_{\Delta,n}$ is a $\Delta$-regular bipartite graph of even
order $n$, with vertices $(i,j)$, for $i=1,2$ and $0\le j\le
n/2-1$, where for every $j$, $0\le j\le n/2-1$, there is an edge
between vertex $(1, j)$ and every vertex $(2,(j+2^k-1)$ mod
(n/2)), for $k=0,1,\cdots,\Delta-1$ (see \cite{xxyf}). Kn\"odel
graphs, $W_{\Delta,n}$, are one of the three important families
of graphs that they have good properties in terms of broadcasting
and gossiping, see for example \cite{gh}. It is worth-noting that
any Kn\"odel graph is a Cayley graph and so it is a
vertex-transitive graph (see \cite{fr}).

Xueliang et. al. \cite{xxyf} studied the domination number in
$3$-regular Kn\"odel graphs $W_{3,n}$. They obtained exact
domination number for $W_{3,n}$. In this paper, we determine the
total domination number in $3$-regular Kn\"odel graphs $W_{3,n}$.
In Section 2, we prove some properties in the Kn\"odel graphs. In
Section 3, we present the total domination number in the
$3$-regular Kn\"odel graphs $W_{3,n}$. We need the following
simple observation from number theory.

\begin{observation}\label{obs0}
If $a$, $b$, $c$, $d$ and $x$ are positive integers such that
$x^a-x^b=x^c-x^d\ne0$, then $a=c$ and $b=d$.
\end{observation}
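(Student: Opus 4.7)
The plan is to reduce the identity to a product of a power of $x$ and a unit (modulo $x$) on each side, and then compare $x$-adic valuations.

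First I would dispose of the degenerate case $x=1$: this forces every power $x^a, x^b, x^c, x^d$ to equal $1$, so both differences are $0$, contradicting $x^a-x^b\ne 0$. Hence we may assume $x\ge 2$. Second, since both differences are equal and nonzero, they must have the same sign. By swapping $(a,b)$ and $(c,d)$ simultaneously if necessary (this leaves the hypothesis invariant), I may assume $a>b$ and $c>d$.

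With these normalizations, factor out the minimum powers to obtain
\[
x^{b}\bigl(x^{a-b}-1\bigr) \;=\; x^{d}\bigl(x^{c-d}-1\bigr).
\]
Here $a-b\ge 1$ and $c-d\ge 1$, so both parenthesized factors are positive integers. The key observation is that for any prime $p$ dividing $x$, one has $x^{a-b}-1\equiv -1\pmod p$, and likewise for $x^{c-d}-1$; hence $\gcd(x^{a-b}-1,\,x)=\gcd(x^{c-d}-1,\,x)=1$. Comparing the $p$-adic valuation on both sides for any prime $p\mid x$ therefore forces $b=d$. Canceling $x^b=x^d$ then yields $x^{a-b}=x^{c-d}$, and since $x\ge 2$ the exponential function is injective, giving $a-b=c-d$, and finally $a=c$.

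The argument is essentially routine; the only point that requires a little care is to make the ``without loss of generality'' step rigorous, i.e.\ to notice that the statement is symmetric under $(a,b,c,d)\mapsto(b,a,d,c)$ so that reducing to the case $a>b,\ c>d$ is legitimate. Once that is set up, unique factorization (or a direct valuation argument) finishes the proof immediately, so no real obstacle is expected.
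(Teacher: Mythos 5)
Your proof is correct. Note, however, that the paper itself never proves this statement: it is introduced as ``the following simple observation from number theory'' and used as a black box (it feeds into Lemma~\ref{lem00}), so there is no authors' argument to compare yours against. Your write-up supplies exactly the justification the paper omits, and every step checks out: the case $x=1$ is genuinely degenerate; the sign of the common nonzero value forces $a>b$ and $c>d$ after the legitimate symmetry $(a,b,c,d)\mapsto(b,a,d,c)$; the factorization $x^{b}\bigl(x^{a-b}-1\bigr)=x^{d}\bigl(x^{c-d}-1\bigr)$ together with $\gcd\bigl(x^{a-b}-1,x\bigr)=\gcd\bigl(x^{c-d}-1,x\bigr)=1$ gives $b\cdot v_p(x)=d\cdot v_p(x)$ for any prime $p\mid x$ (such a prime exists since $x\ge 2$), hence $b=d$; and cancellation plus injectivity of $n\mapsto x^{n}$ for $x\ge 2$ gives $a=c$. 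If anything, your argument is more general than the paper needs: in the application $x=2$, so one could shortcut the valuation step by uniqueness of binary representation, but the valuation argument is clean and handles all $x$ uniformly.
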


\section{Properties in the Kn\"odel graphs}

For simplicity, in this paper, we re-label the vertices of a
Kn\"odel graph as follows: we label $(1,i)$ by $u_{i+1}$ for each
$i=0,1,...,n/2-1$, and $(2,j)$ by $v_{j+1}$ for
$j=0,1,...,n/2-1$. Let $U=\{u_1,u_2,\cdots, u_{\frac{n}{2}}\}$
and $V=\{v_1,v_2,\cdots, v_{\frac{n}{2}}\}$. From now on, the
vertex set of each Kn\"odel graph $W_{\Delta,n}$ is $U\cup V$
such that $U$ and $V$ are the two partite sets of the graph. If
$S$ is a set of vertices of $W_{\Delta,n}$, then clearly, $S\cap
U$ and $S\cap V$ partition $S$, $|S|=|S\cap U|+|S\cap V|$,
$N(S\cap U)\subseteq V$ and $N(S\cap V)\subseteq U$. Note that
two vertices $u_i$ and $v_j$ are adjacent if and only if $j\in
\{i+2^0-1,i+2^1-1,\cdots,i+2^{\Delta-1}-1\}$, where the addition
is taken in modulo $n/2$. Figure 1, shows new labeling of
Kn\"odel graphs $W_{3,8}, W_{3,10}$ and $W_{3,12}$.

\begin{figure}
\begin{center}
\begin{tikzpicture}[scale=1,thick]
\draw (0,0)--(0,2);\draw (0,0)--(1,2);\draw (0,0)--(3,2); \draw
(1,0)--(1,2);\draw (1,0)--(2,2);\draw (1,0)--(0,2); \draw
(3,0)--(3,2);\draw (3,0)--(0,2);\draw (3,0)--(2,2); \draw
(2,0)--(2,2);\draw (2,0)--(3,2);\draw (2,0)--(1,2); \node at
(0,2.2) {$v_{1}$};\node at (1,2.2) {$v_{2}$};\node at (2,2.2)
{$v_{3}$};\node at (3,2.2) {$v_{4}$}; \node at (0,-0.2)
{$u_{1}$};\node at (1,-0.2) {$u_{2}$};\node at (2,-0.2)
{$u_{3}$};\node at (3,-0.2) {$u_{4}$}; \node at (1.5,-0.6)
{$W_{3,8}$};

\draw (4.5,0)--(4.5,2);\draw (4.5,0)--(5.5,2);\draw
(4.5,0)--(7.5,2); \draw (5.5,0)--(5.5,2);\draw
(5.5,0)--(6.5,2);\draw (5.5,0)--(8.5,2); \draw
(6.5,0)--(6.5,2);\draw (6.5,0)--(7.5,2);\draw (6.5,0)--(4.5,2);
\draw (7.5,0)--(7.5,2);\draw (7.5,0)--(8.5,2);\draw
(7.5,0)--(5.5,2); \draw (8.5,0)--(8.5,2);\draw
(8.5,0)--(4.5,2);\draw (8.5,0)--(6.5,2); \node at (4.5,2.2)
{$v_{1}$};\node at (5.5,2.2) {$v_{2}$};\node at (6.5,2.2)
{$v_{3}$};\node at (7.5,2.2) {$v_{4}$};\node at (8.5,2.2)
{$v_{5}$}; \node at (4.5,-0.2) {$u_{1}$};\node at (5.5,-0.2)
{$u_{2}$};\node at (6.5,-0.2) {$u_{3}$};\node at (7.5,-0.2)
{$u_{4}$};\node at (8.5,-0.2) {$u_{5}$}; \node at (6.5,-0.6)
{$W_{3,10}$};

\filldraw[black] (0,0) circle (1.5pt);
\draw (10,0)--(10,2);\draw (10,0)--(11,2);\draw (10,0)--(13,2);
\draw (11,0)--(11,2);\draw (11,0)--(12,2);\draw (11,0)--(14,2);
\draw (12,0)--(12,2);\draw (12,0)--(13,2);\draw (12,0)--(15,2);
\draw (13,0)--(13,2);\draw (13,0)--(14,2);\draw (13,0)--(10,2);
\draw (14,0)--(14,2);\draw (14,0)--(15,2);\draw (14,0)--(11,2);
\draw (15,0)--(15,2);\draw (15,0)--(10,2);\draw (15,0)--(12,2);
\node at (10,2.2) {$v_{1}$};\node at (11,2.2) {$v_{2}$};\node at
(12,2.2) {$v_{3}$};\node at (13,2.2) {$v_{4}$};\node at (14,2.2)
{$v_{5}$};\node at (15,2.2) {$v_{6}$}; \node at (10,-0.2)
{$u_{1}$};\node at (11,-0.2) {$u_{2}$};\node at (12,-0.2)
{$u_{3}$};\node at (13,-0.2) {$u_{4}$};\node at (14,-0.2)
{$u_{5}$};\node at (15,-0.2) {$u_{6}$}; \node at (12.5,-0.6)
{$W_{3,12}$};

\filldraw[black] (0,0) circle (1.5pt);\filldraw[black] (1,0) circle (1.5pt);\filldraw[black] (2,0) circle (1.5pt);\filldraw[black] (3,0) circle (1.5pt);
\filldraw[black] (0,2) circle (1.5pt);\filldraw[black] (1,2) circle (1.5pt);\filldraw[black] (2,2) circle (1.5pt);\filldraw[black] (3,2) circle (1.5pt);
\filldraw[black] (4.5,0) circle (1.5pt);\filldraw[black] (5.5,0) circle (1.5pt);\filldraw[black] (6.5,0) circle (1.5pt);\filldraw[black] (7.5,0) circle (1.5pt);\filldraw[black] (8.5,0) circle (1.5pt);
\filldraw[black] (4.5,2) circle (1.5pt);\filldraw[black] (5.5,2) circle (1.5pt);\filldraw[black] (6.5,2) circle (1.5pt);\filldraw[black] (7.5,2) circle (1.5pt);\filldraw[black] (8.5,2) circle (1.5pt);
\filldraw[black] (10,0) circle (1.5pt);\filldraw[black] (11,0) circle (1.5pt);\filldraw[black] (12,0) circle (1.5pt);\filldraw[black] (13,0) circle (1.5pt);\filldraw[black] (14,0) circle (1.5pt);\filldraw[black] (15,0) circle (1.5pt);
\filldraw[black] (10,2) circle (1.5pt);\filldraw[black] (11,2) circle (1.5pt);\filldraw[black] (12,2) circle (1.5pt);\filldraw[black] (13,2) circle (1.5pt);\filldraw[black] (14,2) circle (1.5pt);\filldraw[black] (15,2) circle (1.5pt);
\end{tikzpicture}
\end{center}
\caption{New labeling of Kn\"odel graphs $W_{3,8}, W_{3,10}$ and
$W_{3,12}$.} \label{fig:M1}
\end{figure}
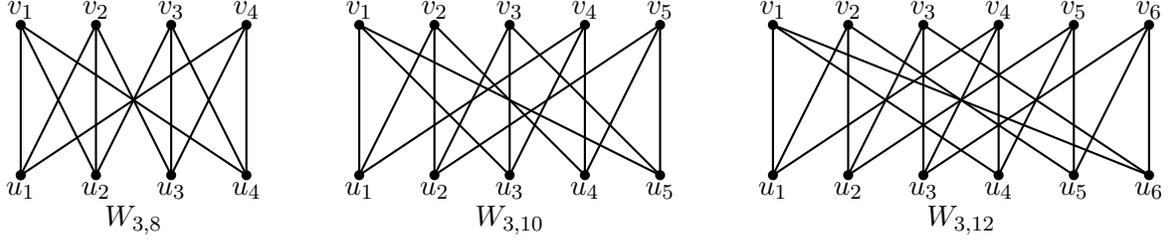
For any subset $\{u_{i_1},u_{i_2},\cdots,u_{i_k}\}$ of $U$ with
$1\le i_1 <i_2<\cdots< i_k\le\frac{n}{2}$, we correspond a
sequence based on the differences of the indices of $u_j$,
$j=i_1,...,i_k$, as follows.

\begin{definition}\label{def1}
For any subset $A=\{u_{i_1},u_{i_2},\cdots,u_{i_k}\}$ of $U$ with
$1\le i_1 <i_2<\cdots< i_k\le\frac{n}{2}$ we define a sequence
$n_1, n_2, \cdots, n_k$, namely \textbf{cyclic-sequence}, where
$n_j=i_{j+1}-i_j$ for $1\le j\le k-1$ and
$n_k=\frac{n}{2}+i_1-i_k$. For two vertices $u_{i_j},
u_{i_{j'}}\in A$ we define \textbf{index-distance} of  $u_{i_j}$
and $u_{i_{j'}}$ by $id(u_{i_j},
u_{i_{j'}})=min\{|i_j-i_{j'}|,\frac{n}{2}-|i_j-i_{j'}|\}$.
\end{definition}

\begin{observation}\label{obs1}
Let $A=\{u_{i_1},u_{i_2},\cdots,u_{i_k}\}\subseteq U$ be a set
such that $1\le i_1 <i_2<\cdots< i_k\le\frac{n}{2}$ and let $n_1,
n_2, \cdots, n_k$ be the corresponding cyclic-sequence of $A$.
Then,\\
(1) $n_1+n_2+\cdots+n_k=\frac{n}{2}$.\\
(2) If $u_{i_j}, u_{i_{j'}}\in A$, then $id(u_{i_j}, u_{i_{j'}})$
equals to sum of some consecutive elements of the cyclic-sequence
of $A$ and $\frac{n}{2}-id(u_{i_j}, u_{i_{j'}})$ is sum of the
remaining elements of the cyclic-sequence. Furthermore,
$\{id(u_{i_j},u_{i_{j'}}),\frac{n}{2}-id(u_{i_j},
u_{i_{j'}})\}=\{|i_j-i_{j'}|,\frac{n}{2}-|i_j-i_{j'}|\}$.
\end{observation}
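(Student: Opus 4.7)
The plan is to verify both parts directly from Definition \ref{def1} by exploiting telescoping and a case distinction on which of two complementary arcs in the cyclic sequence realizes the index-distance.

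For part (1), I would simply telescope. Since $n_j = i_{j+1} - i_j$ for $1 \le j \le k-1$, summing gives
\[
n_1 + n_2 + \cdots + n_{k-1} \;=\; \sum_{j=1}^{k-1}(i_{j+1}-i_j) \;=\; i_k - i_1.
\]
Adding $n_k = \tfrac{n}{2} + i_1 - i_k$ then yields $\sum_{j=1}^{k} n_j = \tfrac{n}{2}$, as required.

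For part (2), without loss of generality assume $j < j'$. Telescoping again, $i_{j'} - i_j = n_j + n_{j+1} + \cdots + n_{j'-1}$, which is a block of consecutive terms of the cyclic-sequence. By part (1), the remaining terms $n_{j'}, n_{j'+1}, \ldots, n_k, n_1, \ldots, n_{j-1}$ (read cyclically) sum to $\tfrac{n}{2} - (i_{j'}-i_j)$. So the pair $\{|i_j - i_{j'}|,\, \tfrac{n}{2} - |i_j - i_{j'}|\}$ is precisely $\{\text{sum of one arc},\,\text{sum of the complementary arc}\}$ of the cyclic sequence. Since $id(u_{i_j},u_{i_{j'}})$ is defined as the smaller of these two values, whichever of the two arc-sums is smaller equals $id(u_{i_j},u_{i_{j'}})$ and the other equals $\tfrac{n}{2}-id(u_{i_j},u_{i_{j'}})$, giving the equality of sets $\{id(u_{i_j},u_{i_{j'}}),\tfrac{n}{2}-id(u_{i_j},u_{i_{j'}})\} = \{|i_j-i_{j'}|,\tfrac{n}{2}-|i_j-i_{j'}|\}$.

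There is essentially no hard step here; the only thing that requires care is the bookkeeping for the ``remaining elements'' arc, which wraps around from $n_{j'}$ through $n_k$ back to $n_{j-1}$. The argument is a pure consequence of the telescoping identity from part (1) combined with the definition of $id$ as a minimum. Thus both assertions follow immediately from Definition \ref{def1}.
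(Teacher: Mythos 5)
Your proof is correct. The paper states this Observation without any proof, treating it as immediate from Definition \ref{def1}; your telescoping computation for (1) and the arc/complementary-arc bookkeeping plus the min--max observation for (2) supply exactly the routine verification the authors left implicit, so there is nothing to compare beyond noting that your reading of ``consecutive'' as cyclic (allowing the wrap-around block $n_{j'},\ldots,n_k,n_1,\ldots,n_{j-1}$) is the intended one.
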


We henceforth use the notation $\mathscr{M}_{\Delta}=\{ 2^a-2^b:0 \leq b<a< \Delta \}$ for $\Delta\geq 2$.

\begin{lemma}\label{lem0}
In the Kn\"odel graph $W_{\Delta,n}$ with vertex set $U\cup V$, for two distinct vertices $u_i$ and $u_j$,
$N(u_i)\cap N(u_j)\ne \emptyset$ if and only if $id(u_i,u_j)\in
\mathscr{M}_{\Delta}$ or $\frac{n}{2}-id(u_i,u_j)\in
\mathscr{M}_{\Delta}$.
\end{lemma}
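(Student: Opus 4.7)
The plan is to translate $N(u_i)\cap N(u_j)\neq\emptyset$ into an arithmetic condition on $i-j$ modulo $n/2$, and then recast that condition in terms of $id(u_i,u_j)$.

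I would begin by writing the two neighborhoods explicitly from the definition, $N(u_i)=\{v_{i+2^k-1}:0\le k\le\Delta-1\}$ and likewise for $N(u_j)$, with all subscripts read modulo $n/2$. A common neighbor exists precisely when $i+2^a-1\equiv j+2^b-1\pmod{n/2}$ for some $a,b\in\{0,\dots,\Delta-1\}$. Since $u_i\neq u_j$, the case $a=b$ is excluded, and by swapping the roles of $i$ and $j$ if necessary I may assume $a>b$. The condition then becomes $i-j\equiv\pm(2^a-2^b)\pmod{n/2}$ for some element $2^a-2^b\in\mathscr{M}_\Delta$.

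Next I would connect this to the index-distance. Set $d=|i-j|$, so $0<d<n/2$. The hypothesis $\Delta\le\lfloor\log_2 n\rfloor$ yields $2^a-2^b\le 2^{\Delta-1}-1<n/2$, so every element of $\mathscr{M}_\Delta$ already lies in $(0,n/2)$ and is its own canonical representative modulo $n/2$. Consequently, the congruence $i-j\equiv\pm(2^a-2^b)\pmod{n/2}$ is equivalent to $d=2^a-2^b$ or $d=n/2-(2^a-2^b)$, that is, $d\in\mathscr{M}_\Delta$ or $n/2-d\in\mathscr{M}_\Delta$. This disjunction is symmetric under the swap $d\leftrightarrow n/2-d$, so it holds for $d$ if and only if it holds for $id(u_i,u_j)=\min\{d,n/2-d\}$, yielding the claimed equivalence.

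The only delicate point is the bound $2^a-2^b<n/2$, which is what prevents the two sign choices in the congruence from collapsing and ensures that distinct members of $\mathscr{M}_\Delta$ stay distinct modulo $n/2$; beyond that the argument is direct modular bookkeeping, so I do not expect any substantial obstacle.
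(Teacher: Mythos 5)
Your proposal is correct and follows essentially the same route as the paper: write out the neighborhoods, reduce the existence of a common neighbor to the congruence $i-j\equiv\pm(2^a-2^b)\pmod{n/2}$, use the bound $2^{\Delta-1}-1<n/2$ to turn that congruence into the exact alternatives $|i-j|\in\mathscr{M}_\Delta$ or $n/2-|i-j|\in\mathscr{M}_\Delta$, and then pass to $id(u_i,u_j)$ via the symmetry of the disjunction. The only cosmetic differences are that the paper first normalizes $i=1$ by vertex-transitivity and proves the two directions separately (constructing the common neighbor explicitly in the converse), whereas you run a single chain of equivalences; the content is the same.
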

\begin{proof}
Since $W_{\Delta,n}$ is vertex-transitive, for simplicity, we put
$1=i<j\le\frac{n}{2}$. We have $id(u_1,u_j)=\min\{j-1,
\frac{n}{2}-(j-1)\}$ and so $\frac{n}{2}-id(u_1,u_j)\}=\max\{j-1,
\frac{n}{2}-(j-1)\}$. Also, we have $N(u_1)=\{v_1, v_2,v_4,
\cdots, v_{2^{\Delta-1}}\}$ and $N(u_j)=\{v_j, v_{j+1},v_{j+3},
\cdots, v_{j+2^{\Delta-1}-1}\}$. First assume that $N(u_1)\cap
N(u_j)\ne \emptyset$. Let $v_k\in N(u_1)\cap N(u_j)$. There exist
two integers $a$ and $b$ such that $0\le a,b\le \Delta-1$ and
$k\equiv2^a\equiv j+2^b-1$(mod n/2). Since $1\le 2^a, 2^b, j\le
\frac{n}{2}$, we have $1\le j+2^b-1<n$. If $1\le j+2^b-1\le
\frac{n}{2}$, then $2^a=j+2^b-1$ and
$j-1=2^a-2^b\in\mathscr{M}_{\Delta}$ and if $\frac{n}{2}<
j+2^b-1<n$, then $2^a=j+2^b-1-\frac{n}{2}$ and
$\frac{n}{2}-(j-1)=2^b-2^a\in\mathscr{M}_{\Delta}$. Therefore, by
Observation \ref{obs1}, $id(u_i,u_j)\in \mathscr{M}_{\Delta}$ or
$\frac{n}{2}-id(u_i,u_j)\in \mathscr{M}_{\Delta}$.

Conversely, suppose $id(u_1,u_j)\in\mathscr{M}_{\Delta}$ or
$\frac{n}{2}-id(u_1,u_j)\in\mathscr{M}_{\Delta}$. Then
$j-1\in\mathscr{M}_{\Delta}$ or
$\frac{n}{2}-(j-1)\in\mathscr{M}_{\Delta}$. If
$j-1\in\mathscr{M}_{\Delta}$, then we have $j-1=2^a-2^b$ for two
integers $0\le a,b\le\Delta-1$. Then $2^a=j+2^b-1$ and $v_{2^a}\in
N(u_1)\cap N(u_j) $. If
$\frac{n}{2}-(j-1)\in\mathscr{M}_{\Delta}$, then we have
$\frac{n}{2}-(j-1)=2^c-2^d$ for two integers $0\le
c,d\le\Delta-1$. Now $2^c=j+2^d-1-\frac{n}{2}\equiv j+2^d-1$(mod
n/2) and $v_{2^c}\in N(u_1)\cap N(u_j) $. Thus in each case,
$N(u_i)\cap N(u_j)\ne\emptyset$.
\end{proof}

\begin{lemma}\label{lem00}
In the Kn\"odel graph $W_{\Delta,n}$ with vertex set $U\cup V$, for two distinct vertices $u_i$ and $u_j$,
$|N(u_i)\cap N(u_j)|=2$ if and only if $id(u_i,u_j)\in
\mathscr{M}_{\Delta}$ and $\frac{n}{2}-id(u_i,u_j)\in
\mathscr{M}_{\Delta}$.
\end{lemma}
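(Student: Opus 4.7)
The plan is to adapt the counting argument from the proof of Lemma~\ref{lem0}, using Observation~\ref{obs0} as the upper-bound ingredient. By vertex-transitivity I may assume $i=1$ and $1<j\le n/2$. Then, as in Lemma~\ref{lem0}, any common neighbor $v_k\in N(u_1)\cap N(u_j)$ arises from a pair $(a,b)$ with $0\le a,b\le \Delta-1$ satisfying $2^a\equiv j+2^b-1\pmod{n/2}$, and this congruence splits into two mutually exclusive cases: either $j+2^b-1\le n/2$, forcing $j-1=2^a-2^b$ (Case 1, with $a>b$), or $j+2^b-1>n/2$, forcing $\tfrac{n}{2}-(j-1)=2^b-2^a$ (Case 2, with $b>a$).

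Next I would invoke Observation~\ref{obs0} to conclude that each case has at most one solution: the equation $j-1=2^a-2^b$ determines $(a,b)$ uniquely, so Case 1 yields at most one common neighbor, and likewise for Case 2. Moreover, distinct pairs $(a,b)$ produce distinct common neighbors because $b$ is in bijection with the neighbors of $u_j$, and the two cases use disjoint values of $b$ (the conditions $j+2^b-1\le n/2$ and $j+2^b-1>n/2$ cannot simultaneously hold for the same $b$). Hence $|N(u_1)\cap N(u_j)|\le 2$, with equality if and only if both Case 1 and Case 2 admit a solution, i.e., if and only if $j-1\in\mathscr{M}_\Delta$ and $\tfrac{n}{2}-(j-1)\in\mathscr{M}_\Delta$.

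Finally, I would translate back to index-distance via Observation~\ref{obs1}(2): since $\{j-1,\tfrac{n}{2}-(j-1)\}=\{id(u_1,u_j),\tfrac{n}{2}-id(u_1,u_j)\}$, the condition that both $j-1$ and $\tfrac{n}{2}-(j-1)$ lie in $\mathscr{M}_\Delta$ is precisely the condition that both $id(u_1,u_j)$ and $\tfrac{n}{2}-id(u_1,u_j)$ lie in $\mathscr{M}_\Delta$, giving the stated equivalence.

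The main obstacle is the bookkeeping that ensures distinct pairs $(a,b)$ from the two different cases really do yield \emph{distinct} common neighbors (so that the lower bound of $2$ is achieved, not merely $1$). Once this is pinned down by the fact that each $b$ indexes a unique neighbor of $u_j$ and the two cases occupy disjoint ranges of $b$, the rest of the proof is a routine application of Observations~\ref{obs0} and~\ref{obs1}.
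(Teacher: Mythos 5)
Your proof is correct, and the bookkeeping you flag as the main obstacle does go through: the two cases are exhaustive and mutually exclusive because $1\le j+2^b-1<n$; Observation~\ref{obs0} gives at most one solution pair $(a,b)$ per case; solvability of Case 1 (resp.\ Case 2) is exactly the condition $j-1\in\mathscr{M}_\Delta$ (resp.\ $\tfrac{n}{2}-(j-1)\in\mathscr{M}_\Delta$, and the case condition $j+2^b-1\le n/2$, resp.\ $>n/2$, holds automatically for such a solution); and distinct values of $b$ give distinct neighbors of $u_j$ since $2^{\Delta-1}\le n/2$ forces $|2^b-2^{b'}|<n/2$. Your route uses the same raw ingredients as the paper (the congruence $2^a\equiv j+2^b-1\pmod{n/2}$, the split according to wrap-around past $n/2$, Observation~\ref{obs0} for uniqueness, Observation~\ref{obs1} for the translation to index-distance), but the architecture is genuinely different. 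The paper proves the two implications separately, each by contradiction: in the forward direction it takes two distinct common neighbors and uses Observation~\ref{obs0} to force their associated differences $2^b-2^a$ and $2^{b'}-2^{a'}$ to have opposite signs; in the converse it explicitly constructs two common neighbors from the two memberships, verifies they are distinct (otherwise $2^{b'}-2^b=\tfrac{n}{2}$, impossible), and then needs a further application of Observation~\ref{obs0} to exclude a third common neighbor. Your single counting argument --- $|N(u_1)\cap N(u_j)|$ equals the number of elements of $\{j-1,\ \tfrac{n}{2}-(j-1)\}$ lying in $\mathscr{M}_\Delta$ --- delivers both implications at once, makes the ``at most two'' bound automatic rather than a separate step, and as a bonus simultaneously re-proves Lemma~\ref{lem0} and Corollary~\ref{cor1}(i). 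What the paper's organization buys in exchange is that each direction is self-contained and never requires setting up the full bijection between common neighbors and solution pairs, which is the one piece of your argument that must be stated carefully for the count to be exact.
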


\begin{proof} Without loss of generality, we assume that $1\le i<j\le \frac{n}{2}$.
Suppose that $|N(u_i)\cap N(u_j)|=2$ and $v_k,v_{k'}\in
N(u_i)\cap N(u_j)$ are two distinct vertices in $V$. There exist
two integers $a$ and $b$ such that $0\le a,b\le \Delta-1$ and
$k\equiv i+2^a-1\equiv j+2^b-1$(mod n/2). Similarly, there exist
two integers $a'$ and $b'$ such that $0\le a',b'\le \Delta-1$ and
$k'\equiv i+2^{a'}-1\equiv j+2^{b'}-1$(mod n/2). Now we have
$j-i\equiv 2^b-2^a\equiv 2^{b'}-2^{a'}$(mod n/2). We know that
$-\frac{n}{2}< 2^b-2^a, 2^{b'}-2^{a'} <\frac{n}{2}$. If
$-\frac{n}{2}< 2^b-2^a, 2^{b'}-2^{a'} <0$ or $0<2^b-2^a,
2^{b'}-2^{a'} <\frac{n}{2}$, then we have
$2^b-2^a=2^{b'}-2^{a'}\ne0$. Observation \ref{obs0} implies that $b=b'$ and
therefore $k\equiv k'$(mod n/2) and  $v_k=v_{k'}$, a
contradiction. By symmetry, we assume that
$0<2^b-2^a<\frac{n}{2}$ and $-\frac{n}{2}<2^{b'}-2^{a'} <0$. Since
$0<j-i<\frac{n}{2}$, we have $j-i=2^b-2^a$ and
$\frac{n}{2}-(j-i)=2^{a'}-2^{b'}$ which implies that $j-i \in
\mathscr{M}_{\Delta}$ and $\frac{n}{2}-(j-i) \in
\mathscr{M}_{\Delta}$. Thus by Observation \ref{obs1},
$id(u_i,u_j)\in \mathscr{M}_{\Delta}$ and
$\frac{n}{2}-id(u_i,u_j)\in \mathscr{M}_{\Delta}$.

Conversely, assume that $id(u_i,u_j)\in \mathscr{M}_{\Delta}$ and
$\frac{n}{2}-id(u_i,u_j)\in \mathscr{M}_{\Delta}$ for two distinct vertices $u_i$ and $u_j$. There exist
two integers $a$ and $b$ such that $0\le b<a\le \frac{n}{2}$ and
$j-i=2^a-2^b$. Also there exist two integer $a'$ and $b'$ such
that $0\le a'<b'\le \frac{n}{2}$ and
$\frac{n}{2}-(j-i)=2^{b'}-2^{a'}$. Now we have $i+2^a-1=j+2^b-1$
and $i+2^{a'}-1=j+2^{b'}-1-\frac{n}{2}\equiv j+2^{b'}-1$(mod
n/2). We set $k=i+2^a-1$ and $k'=i+2^{a'}-1$. Then $v_k,v_{k'}\in
N(u_i)\cap N(u_j)$ and $|N(u_i)\cap N(u_j)|\ge2$. Notice that
$k\not\equiv k'$(mod n/2), since otherwise $a=a'$ and
$2^{b'}-2^b=\frac{n}{2}$, a contradiction. Suppose that
$|N(u_i)\cap N(u_j)|\geq3$. Let $v_k,v_{k'},v_{k''}\in N(u_i)\cap
N(u_j)$ be three distinct vertices. Similar to the first part of the proof, for $v_k$ and $v_{k'}$,
there exist two
integers $a''$ and $b''$ such that $0\le a'',b''\le \Delta-1$ and
$k''\equiv i+2^{a''}-1\equiv j+2^{b''}-1$(mod n/2) and thus
$j-i\equiv 2^{a''}-2^{b''}$(mod n/2). Since $u_i$ and $u_j$ are disctinct, we have $a''\neq b''$. If $a''>b''$, then
$j-i=2^{a''}-2^{b''}$ and it can be seen that $j-i=\frac{n}{2}-(2^a
-2^b)=\frac{n}{2}-(2^{a'}-2^{b'})$ and Observation \ref{obs0} implies that $a=a'$ and thus $v_k=v_{k'}$, a
contradiction. If $a''<b''$, then
$j-i=\frac{n}{2}-(2^{a''}-2^{b''})$ and it can be seen that
$j-i=2^a-2^b=2^{a'}-2^{b'}$ and Observation \ref{obs0} implies that $a=a'$, a contradiction. Consequently
$|N(u_i)\cap N(u_j)|=2$.
\end{proof}

\begin{corollary}\label{cor1} (i) In the Kn\"odel graph $W_{\Delta,n}$ with vertex set $U\cup V$, for each $1\le i<j\le n/2$, $|N(u_i)\cap N(u_j)|=1$ if and only if precisely one of the values $id(u_i,u_j)$ and $\frac{n}{2}-id(u_i,u_j)$ belongs to $\mathscr{M}_{\Delta}$.\\
(ii) In the Kn\"odel graph $W_{\Delta,n}$, there exist distinct
vertices with two common
neighbors if and only if $n=2^a-2^b+2^c-2^d$ and
$a>b\ge1,c>d\ge1$.
\end{corollary}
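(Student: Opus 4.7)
The plan is to deduce both parts directly from Lemmas \ref{lem0} and \ref{lem00}, with no new combinatorial work required.

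For part (i), I will first point out that the paragraph ``suppose $|N(u_i)\cap N(u_j)|\ge 3$'' inside the proof of Lemma \ref{lem00} in fact proves a sharper statement than the lemma advertises: for any two distinct $u_i,u_j\in U$, the size $|N(u_i)\cap N(u_j)|$ can never exceed $2$. The contradiction there is produced via Observation \ref{obs0} and makes no real use of the two-common-neighbour hypothesis, so it upgrades to an unconditional bound $|N(u_i)\cap N(u_j)|\in\{0,1,2\}$. Lemma \ref{lem0} characterises when this cardinality is $\ge 1$ (at least one of $id(u_i,u_j)$ and $\frac{n}{2}-id(u_i,u_j)$ belongs to $\mathscr{M}_\Delta$) and Lemma \ref{lem00} characterises when it equals $2$ (both belong to $\mathscr{M}_\Delta$). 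Subtracting the two characterisations gives exactly the statement of part (i): the common-neighbour count is $1$ iff precisely one of the two values lies in $\mathscr{M}_\Delta$.

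For part (ii), I use vertex-transitivity of $W_{\Delta,n}$ to fix one of the two vertices as $u_1$, so the question reduces to whether there exists $j$ with $2\le j\le \frac{n}{2}$ such that both $j-1$ and $\frac{n}{2}-(j-1)$ belong to $\mathscr{M}_\Delta$. If such a $j$ exists, I write $j-1=2^{a}-2^{b}$ and $\frac{n}{2}-(j-1)=2^{c}-2^{d}$ with $0\le b<a\le\Delta-1$ and $0\le d<c\le\Delta-1$; summing, doubling, and re-indexing via $a\mapsto a+1$, etc.\ produces $n=2^{a+1}-2^{b+1}+2^{c+1}-2^{d+1}$ with the stated inequalities $a+1>b+1\ge 1$ and $c+1>d+1\ge 1$. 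Conversely, given a representation $n=2^{\alpha}-2^{\beta}+2^{\gamma}-2^{\delta}$ with $\alpha>\beta\ge 1$ and $\gamma>\delta\ge 1$, I will set $j:=1+2^{\alpha-1}-2^{\beta-1}$; then $j-1\in\mathscr{M}_\Delta$ and $\frac{n}{2}-(j-1)=2^{\gamma-1}-2^{\delta-1}\in\mathscr{M}_\Delta$, so Lemma \ref{lem00} delivers $|N(u_1)\cap N(u_j)|=2$.

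The main obstacle I expect is bookkeeping rather than substance: keeping straight the exponent shift by $1$ between the lemma's natural range $\{0,\ldots,\Delta-1\}$ for $\mathscr{M}_\Delta$ and the corollary's range starting at $1$, and checking in the converse that the $j$ I construct lies in $\{2,\ldots,\frac{n}{2}\}$ and that the shifted exponents remain $\le\Delta-1$ (the latter being implicit in the corollary's setup, since otherwise the powers cannot arise as differences of elements from $\mathscr{M}_\Delta$). Once these indexing points are reconciled, both parts fall out of Lemmas \ref{lem0} and \ref{lem00} without further analysis.
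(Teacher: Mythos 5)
Your proposal is correct and matches the paper's (implicit) derivation: the corollary carries no proof in the paper precisely because it is meant to fall out of Lemmas \ref{lem0} and \ref{lem00} exactly as you describe, with the needed unconditional bound $|N(u_i)\cap N(u_j)|\le 2$ extracted from the three-common-neighbours paragraph of Lemma \ref{lem00} (equivalently, from combining its two directions: $|N(u_i)\cap N(u_j)|\ge 2$ forces both values into $\mathscr{M}_\Delta$, which in turn forces the count to be exactly $2$). Your insistence on reading the implicit constraint $a,c\le\Delta$ into part (ii) is also the right call: without it the stated equivalence literally fails (e.g.\ $n=14$, $\Delta=3$, where $14=2^4-2^2+2^2-2^1$ but no two vertices of $W_{3,14}$ share two neighbours, since $\mathscr{M}_3=\{1,2,3\}$ and $1+2+3<7$), so this is an imprecision in the paper's statement rather than a gap in your argument.
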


\begin{corollary}\label{cor2}
Any three vertices in the Kn\"odel graph $W_{\Delta,n}$ have at
most one common neighbor. Indeed, any Kn\"odel graph is a
$K_{2,3}$-free graph.
\end{corollary}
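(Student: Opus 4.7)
The plan is a proof by contradiction using a pigeonhole argument on exponent pairs. Suppose three distinct vertices share two common neighbors. Bipartiteness forces all three to lie in the same partite set, which we may take to be $U$; write them as $u_{i_1}, u_{i_2}, u_{i_3}$ with distinct indices, and let $v_l, v_{l'} \in V$ denote their two common neighbors.

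The central step is to extract modular constraints. For each $s \in \{1,2,3\}$, the conditions $v_l, v_{l'} \in N(u_{i_s})$ yield exponents $a_s, b_s \in \{0, 1, \ldots, \Delta-1\}$ with $l \equiv i_s + 2^{a_s} - 1$ and $l' \equiv i_s + 2^{b_s} - 1 \pmod{n/2}$; subtracting gives $l - l' \equiv 2^{a_s} - 2^{b_s} \pmod{n/2}$, and distinctness of $v_l$ and $v_{l'}$ forces $a_s \neq b_s$. Since $\Delta \leq \lfloor \log_2 n \rfloor$ implies $2^{a_s}, 2^{b_s} \leq n/2$, the nonzero integer $2^{a_s} - 2^{b_s}$ lies in $(-n/2, n/2)$ and therefore equals one of just two values, namely the representative $\alpha \in (0, n/2)$ of $l - l' \pmod{n/2}$ or $\alpha - n/2$. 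Observation \ref{obs0} then says that each fixed nonzero integer is representable as $2^a - 2^b$ in at most one way, so there are at most two candidate pairs $(a_s, b_s)$ available overall.

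The pigeonhole principle now delivers the contradiction: placing three indices $s \in \{1,2,3\}$ into at most two candidate pairs forces $(a_s, b_s) = (a_{s'}, b_{s'})$ for some $s \neq s'$, and then $l \equiv i_s + 2^{a_s} - 1 \equiv i_{s'} + 2^{a_{s'}} - 1 \pmod{n/2}$ gives $i_s \equiv i_{s'} \pmod{n/2}$, hence $i_s = i_{s'}$ since both lie in $\{1, \ldots, n/2\}$, contradicting distinctness. The $K_{2,3}$-free statement is then immediate, because any $K_{2,3}$ subgraph in the bipartite graph $W_{\Delta,n}$ would have three vertices on one side sharing two common neighbors on the other. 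The main subtlety I expect is the bound $|2^{a_s} - 2^{b_s}| < n/2$: this is what pins the integer down to one of only two values modulo $n/2$, and it relies essentially on the hypothesis $\Delta \leq \lfloor \log_2 n \rfloor$ in the definition of a Kn\"odel graph; the rest is routine modular bookkeeping.
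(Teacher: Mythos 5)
Your proof is correct. The paper gives no standalone proof of this corollary: it is meant to fall out of Lemma~\ref{lem00} (together with Lemma~\ref{lem0} and Corollary~\ref{cor1}), whose proof already rules out a pair of vertices having three common neighbors. Note that your configuration and the paper's are transposes of the same $K_{2,3}$: three vertices on one side sharing two common neighbors is exactly two vertices on the other side sharing three common neighbors. Consequently the two arguments run on the same engine --- extract exponent pairs from the adjacency rule, observe that the relevant difference has only two integer representatives in $(-n/2,n/2)$ because $2^{\Delta-1}\le n/2$, invoke Observation~\ref{obs0} for uniqueness of the representation as $2^a-2^b$, and let the pigeonhole force a coincidence contradicting distinctness. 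What your version buys is self-containedness: you bypass the index-distance/$\mathscr{M}_{\Delta}$ formalism and the chain Lemma~\ref{lem0} $\to$ Lemma~\ref{lem00} $\to$ Corollary~\ref{cor1}, and you avoid relying on the paper's remark (stated only later, after Lemma~\ref{lem0000}) that results proved for pairs in $U$ transfer to pairs in $V$ --- a transfer that is genuinely needed if one derives the corollary by applying the cap $|N(v_l)\cap N(v_{l'})|\le 2$ to the two common neighbors, since those lie in $V$. What the paper's route buys is economy: the characterizations in Lemmas~\ref{lem0} and \ref{lem00} are needed anyway for the total domination argument in Section~3, so the corollary there comes essentially for free.
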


\begin{lemma}\label{lem000}
In the Kn\"odel graph $W_{\Delta,n}$ with vertex set $U\cup V$ and $\Delta<\emph{log}_2(n/2+2)$, we have:\\
(i) $|N(u_i)\cap N(u_j)|\le1$, $1\le i<j\le n/2$.\\
(ii) $|N(u_i)\cap N(u_j)|=1$ if and only if $id(u_i,u_j)\in \mathscr{M}_{\Delta}$.
\end{lemma}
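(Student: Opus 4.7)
The plan is to leverage Lemma \ref{lem00}, Corollary \ref{cor1}(i), and the arithmetic content of the hypothesis, which rearranges as $2^\Delta < n/2+2$. The single key observation is that every element of $\mathscr{M}_\Delta=\{2^a-2^b:0\le b<a<\Delta\}$ is at most $2^{\Delta-1}-2^0=2^{\Delta-1}-1$, attained at $(a,b)=(\Delta-1,0)$.

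For part (i), I would argue by contradiction. Suppose $|N(u_i)\cap N(u_j)|\ge 2$ for some $i<j$. By Corollary \ref{cor2} this forces $|N(u_i)\cap N(u_j)|=2$, so Lemma \ref{lem00} places both $id(u_i,u_j)$ and $\tfrac{n}{2}-id(u_i,u_j)$ in $\mathscr{M}_\Delta$. Each is therefore bounded by $2^{\Delta-1}-1$, while their sum is $n/2$. Hence $n/2\le 2(2^{\Delta-1}-1)=2^\Delta-2$, i.e.\ $2^\Delta\ge n/2+2$, contradicting the hypothesis.

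For part (ii), combining Corollary \ref{cor1}(i) with part (i) already shows that $|N(u_i)\cap N(u_j)|=1$ iff exactly one of $id(u_i,u_j)$ and $\tfrac{n}{2}-id(u_i,u_j)$ lies in $\mathscr{M}_\Delta$; what remains is to pin down which one. By definition $id(u_i,u_j)\le \tfrac{n}{2}-id(u_i,u_j)$, hence $id(u_i,u_j)\le n/4$. If the larger value $\tfrac{n}{2}-id(u_i,u_j)$ were in $\mathscr{M}_\Delta$, then $\tfrac{n}{2}-id(u_i,u_j)\le 2^{\Delta-1}-1$, so $id(u_i,u_j)\ge n/2-2^{\Delta-1}+1$; together with $id(u_i,u_j)\le n/4$ this again yields $2^\Delta\ge n/2+2$, contradicting the hypothesis. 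Thus the one in $\mathscr{M}_\Delta$ must be $id(u_i,u_j)$. The converse is immediate: if $id(u_i,u_j)\in\mathscr{M}_\Delta$, Lemma \ref{lem0} gives at least one common neighbor and part (i) gives at most one.

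I do not anticipate a serious obstacle; the whole argument is arithmetic once the sharp bound $\max\mathscr{M}_\Delta=2^{\Delta-1}-1$ is identified and combined with the trivial observation $id(u_i,u_j)\le n/4$.
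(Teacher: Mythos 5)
Your proposal is correct and takes essentially the same route as the paper: part (i) via Corollary \ref{cor2} and Lemma \ref{lem00} together with the bound $\max\mathscr{M}_{\Delta}=2^{\Delta-1}-1$, and the forward direction of (ii) by ruling out $\frac{n}{2}-id(u_i,u_j)\in\mathscr{M}_{\Delta}$ through the same arithmetic (you contradict the hypothesis $2^{\Delta}<n/2+2$ where the paper contradicts $id(u_i,u_j)\le\frac{n}{2}-id(u_i,u_j)$, which is equivalent). The only cosmetic difference is in the converse of (ii), where you combine Lemma \ref{lem0} with part (i) instead of showing $\frac{n}{2}-id(u_i,u_j)\notin\mathscr{M}_{\Delta}$ and citing Corollary \ref{cor1}.
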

\begin{proof}
(i) Suppose to the contrary that $|N(u_i)\cap N(u_j)|>1$, then by
Corollary \ref{cor2} we have $|N(u_i)\cap N(u_j)|=2$. Then the
Lemma \ref{lem00} implies that $id(u_i,u_j)\in
\mathscr{M}_{\Delta}$ and $\frac{n}{2}-id(u_i,u_j)\in
\mathscr{M}_{\Delta}$. Thus $id(u_i,u_j)\le 2^{\Delta-1}-1$,
$\frac{n}{2}-id(u_i,u_j)\le 2^{\Delta-1}-1$ and $\frac{n}{2}\le
2^{\Delta}-2$. This inequality implies that
$\Delta\ge\emph{log}_2(n/2+2)$, a contradiction. Hence
$|N(u_i)\cap N(u_j)|\le1$, as desired.

(ii) Assume that $|N(u_i)\cap N(u_j)|=1$. By Corollary \ref{cor1},
precisely one of the values $id(u_i,u_j)$ and
$\frac{n}{2}-id(u_i,u_j)$ belongs to $\mathscr{M}_{\Delta}$. If
$\frac{n}{2}-id(u_i,u_j)\in \mathscr{M}_{\Delta}$, then
$\frac{n}{2}-id(u_i,u_j)\le 2^{\Delta-1}-1$ and so
$2^{\Delta}-2-id(u_i,u_j)< 2^{\Delta-1}-1$. Now , we have
$2^{\Delta-1}-1<id(u_i,u_j)$ and so
$\frac{n}{2}-id(u_i,u_j)<id(u_i,u_j)$, a contradiction by
definition of index-distance. Therefore, $id(u_i,u_j)\in
\mathscr{M}_{\Delta}$.

Conversely, Assume that $id(u_i,u_j)\in
\mathscr{M}_{\Delta}$. Thus, $id(u_i,u_j)\le 2^{\Delta-1}-1$ and
so $\frac{n}{2}-id(u_i,u_j)\ge\frac{n}{2}-
2^{\Delta-1}+1>2^{\Delta}-2- 2^{\Delta-1}+1= 2^{\Delta-1}-1$.
Therefore, $\frac{n}{2}-id(u_i,u_j)\notin \mathscr{M}_{\Delta}$
and by Corollary \ref{cor1} we have $|N(u_i)\cap N(u_j)|=1$.
\end{proof}

\begin{lemma}\label{lem0000}
Let $W_{\Delta,n}$ be a Kn\"odel graph with vertex set $U\cup V$.
For any non-empty subset $A\subseteq U$:\\
(i) $\underset{v\in N(A)}{\sum}|N(v)\cap A|=\Delta |A|$.\\
(ii) The corresponding cyclic-sequence of $A$ has at most $\Delta |A|-|N(A)|$ elements belonging to $\mathscr{M}_{\Delta}$.
\end{lemma}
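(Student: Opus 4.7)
The plan is to treat (i) as a straightforward bipartite double-count and then derive (ii) by a charging argument that matches the identity $\sum_{v\in N(A)}(|N(v)\cap A|-1)=\Delta|A|-|N(A)|$ produced by (i). For (i), bipartiteness forces every edge incident to $A$ to land in $V$, and each $u\in A$ contributes exactly $\Delta$ such edges; counting the $\Delta|A|$ edges from the $V$-side gives $\Delta|A|=\sum_{v\in V}|N(v)\cap A|=\sum_{v\in N(A)}|N(v)\cap A|$, since the summand vanishes outside $N(A)$.

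For (ii), I exploit the fact that every element of $\mathscr{M}_\Delta$ has a unique representation $2^a-2^b$ with $0\le b<a<\Delta$ (forced by $2$-adic valuation, since $2^a-2^b=2^b(2^{a-b}-1)$). For each index $l$ with $n_l\in\mathscr{M}_\Delta$, I write $n_l=2^{a_l}-2^{b_l}$; the relation $i_{l+1}\equiv i_l+n_l\pmod{n/2}$ (which also covers the cyclic wrap at $l=k$) makes $v_l:=v_{i_l+2^{a_l}-1}=v_{i_{l+1}+2^{b_l}-1}$ a common neighbor of $u_{i_l}$ and $u_{i_{l+1}}$. Charging $l$ to $v_l$ and writing $c_v$ for the resulting total charge at $v$, the number of cyclic-sequence entries in $\mathscr{M}_\Delta$ equals $\sum_{v\in N(A)}c_v$, so (ii) reduces to the per-vertex inequality $c_v\le|N(v)\cap A|-1$ for each $v\in N(A)$.

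To prove the per-vertex bound, fix $v$, set $S_v=N(v)\cap A$, and give each $u=u_i\in S_v$ the unique color $c(u)\in\{0,\ldots,\Delta-1\}$ such that $v=v_{i+2^{c(u)}-1}$; distinct vertices of $S_v$ receive distinct colors. By construction, an index $l$ is charged to $v$ exactly when $u_{i_l},u_{i_{l+1}}\in S_v$ and $c(u_{i_l})>c(u_{i_{l+1}})$, i.e., exactly when the cyclic-consecutive $A$-pair $(u_{i_l},u_{i_{l+1}})$ lies in $S_v$ and is a descent of the corresponding cyclic color sequence. If $S_v\subsetneq A$, then $S_v$ is a proper subset of the $k$-cycle formed by $A$, so the number of cyclic-consecutive $A$-pairs with both endpoints in $S_v$ is at most $|S_v|-1$, which already bounds $c_v$. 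The delicate case is $S_v=A$: here $A\subseteq N(v)$ (so $|A|\le\Delta$), all $k$ cyclic pairs lie in $S_v$, and the crude count gives $|S_v|$, one too many. I recover the missing unit via the observation that the colors $c(u_{i_1}),\ldots,c(u_{i_k})$ are $k$ distinct values in $\{0,\ldots,\Delta-1\}$ arranged cyclically, and any cyclic sequence of distinct values has at least one ascent (the successor of the minimum exceeds it); hence at most $k-1=|S_v|-1$ of these pairs are descents, which is the bound needed. Summing the per-vertex inequality over $v\in N(A)$ then yields (ii). The main obstacle is precisely this $S_v=A$ case, which is also what dictates using the \emph{oriented} factorization $n_l=2^{a_l}-2^{b_l}$ (yielding a specific orientation at each common-neighbor pair) rather than an arbitrary choice of common neighbor.
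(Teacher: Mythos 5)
Your proof is correct, and while it shares the paper's overall skeleton, it resolves the one delicate case by a genuinely different device. Part (i) is the same edge double-count in both. For part (ii), both arguments charge each cyclic-sequence entry lying in $\mathscr{M}_{\Delta}$ to a common neighbor of the corresponding consecutive pair of $A$, and then bound the charge at each $v\in N(A)$ by $r(v)-1$, where $r(v)=|N(v)\cap A|$; summing against the identity $\sum_{v\in N(A)}[r(v)-1]=\Delta|A|-|N(A)|$ from (i) finishes. The difference is that the paper's charging is \emph{unoriented}: it sets $X_v=\{j: u_{i_j},u_{i_{j+1}}\in N(v)\cap A\}$ and proves $|X_v|\le r(v)-1$ by the same path-in-a-cycle argument you use, which is only valid when $N(v)\cap A$ is a proper subset of $A$. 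Consequently the paper needs a top-level case split: it first disposes of $\Delta|A|-|N(A)|\ge|A|$ trivially, and then treats the case where some $v'$ satisfies $r(v')=|A|$ by a separate global argument --- normalizing $v'=v_{n/2}$ via vertex-transitivity, writing the indices of $A$ explicitly as $n/2-2^{a_j}+1$, and invoking Lemma \ref{lem00} to show the wrap-around entry is not in $\mathscr{M}_{\Delta}$, so that the number of entries in $\mathscr{M}_{\Delta}$ is exactly $|A|-1$ there. Your oriented charging --- sending $l$ to the unique common neighbor dictated by the representation $n_l=2^{a_l}-2^{b_l}$ --- makes the charge at $v$ count precisely the color descents, and the observation that a cyclic arrangement of distinct colors has at least one ascent gives $c_v\le r(v)-1$ even when $N(v)\cap A=A$. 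This buys a uniform, purely local proof with no case split, no appeal to vertex-transitivity, and no reliance on Lemma \ref{lem00}; what the paper's route buys is that it runs entirely on lemmas already established (Lemmas \ref{lem0} and \ref{lem00}), and in its special case it pins down the exact count $|A|-1$ rather than just an upper bound.
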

\begin{proof}
Let  $A\subseteq U$ be a non-emptyset.\\
(i) It is obvious that the induced subgraph graph $H=W_{\Delta,n}[A\cup N(A)]$ is a bipartite graph and $|E(H)|=\underset{u\in A}{\sum}deg_{H}(u)=\underset{v\in N(A)}{\sum}deg_{H}(v)$, where $E(H)$ is the edge set of $S$. If $u\in A$, then $deg_{H}(u)=\Delta$, and for $v\in N(A)$ we have $deg_{H}(v)=|N(v)\cap A|$. Thus, $\underset{u\in A}{\sum}deg_{H}(u)=\underset{u\in A}{\sum}\Delta=\Delta|A|$ and $\underset{v\in N(A)}{\sum}deg_{H}(v)=\underset{v\in N(A)}{\sum}|N(v)\cap A|$. Consequently,  $\underset{v\in N(A)}{\sum}|N(v)\cap A|=\Delta |A|.$
\\

(ii) Suppose that $A=\{u_{i_1},u_{i_2},\cdots,u_{i_{|A|}}\}$,
where $1\le i_1<i_2<\cdots<i_{|A|}\le \frac{n}{2}$, and let
$n_1,n_2,\cdots,n_{|A|}$ be the corresponding cyclic-sequence of
$A$.  For any vertex $v\in N(A)$, let $r(v)=|N(v)\cap A|$.
Let $J=\{j:n_j\in\mathscr{M}_{\Delta}\}$ and $R=\Delta |A|-|N(A)|$. We prove that $R\ge|J|$. If $R\ge |A|$, then we have nothing to prove, since $|J|\leq |A|$. Assume that
$R< |A|$ and notice that by part (i), $$R=\Delta |A|-|N(A)|=\underset{v\in N(A)}{\sum}|N(v)\cap A|-\sum_{v\in N(A)}1=\underset{v\in
N(A)}{\sum}[r(v)-1].$$ If $\{v\in N(A):r(v)\ge 2\}=\emptyset$, then $R=0$ and $J=\emptyset$, and so  $R\ge|J|$. Thus assume that $\{v\in N(A):r(v)\ge 2\}\neq \emptyset$. Then $R=\underset{\substack{v\in
N(A)\\r(v)\ge2}}{\sum}[r(v)-1].$

Assume that there exists $v'\in N(A)$ such that $r(v')=|A|$. Then $R=r(v')-1+\underset{\substack{v\in N(A)\\r(v)\ge2\\v\ne v'}}{\sum}[r(v)-1]=|A|-1+\underset{\substack{v\in N(A)\\r(v)\ge2\\v\ne v'}}{\sum}[r(v)-1]$. Since $R<|A|$, we obtain that $\underset{\substack{v\in N(A)\\r(v)\ge2\\v\ne v'}}{\sum}[r(v)-1]=0$, $R=|A|-1$, and for each $v\in N(A)\setminus \{v'\}$ we have $r(v)=1$. Since $W_{\Delta,n}$ is vertex transitive, without loss of generality, we assume that $v'=v_{n/2}$.

According to the definition of a Knodel graph, there exist integers $0\le a_{|A|}<a_{|A|-1}<\cdots<a_2<a_1\le\Delta-1$ such that $i_j =\frac{n}{2}-2^{a_j}+1$ for each $1\le j\le|A|$. Moreover, $n_j=i_{j+1}-i_j=2^{a_{i_j}}-2^{a_{i_{j+1}}} \in \mathscr{M}_{\Delta}$ for each $1\le j\le|A|-1$. Evidently, $i_{|A|}-i_{|A|-1}=n_1+n_2+...+n_{|A|-1}=2^{a_{|A|-1}}-2^{a_{|A|}}\in \mathscr{M}_{\Delta}$ and $n_{|A|}=n/2-(i_{|A|}-i_{|A|-1})$. We show that  $n_{|A|}\not \in \mathscr{M}_{\Delta}$. Suppose to the contrary that $n_{|A|} \in \mathscr{M}_{\Delta}$. Since $n_{|A|}=\frac{n}{2}-(i_{|A|}-i_{|A|-1})\in \mathscr{M}_{\Delta}$ and $i_{|A|}-i_{|A|-1}\in \mathscr{M}_{\Delta}$, by Observation \ref{obs1}, $id(u_{i_1},u_{i_{|A|}})\in \mathscr{M}_{\Delta} $ and $\frac{n}{2}-id(u_{i_1},u_{i_{|A|}})\in \mathscr{M}_{\Delta}$, and by Lemma \ref{lem00}, $|N(u_{i_1})\cap N(u_{i_{|A|}})|=2$. Now there exists $v''\ne v_{n/2}$ such that $v''\in N(u_{i_1})\cap N(u_{i_{|A|}})$ and $r(v'')\ge 2$, a contradiction. Therefore, $n_{|A|}\not \in \mathscr{M}_{\Delta}$. Since  $n_j \in \mathscr{M}_{\Delta}$ for each $1\le j\le|A|-1$, we obtain that $|J|=|A|-1=R$. Thus there are at most $R=|A|-1$ elements of the cyclic sequence of $A$ which belong to $\mathscr{M}_{\Delta}$.
 
Next assume that $r(v)<|A|$ for any $v\in N(A)$. Let $X_v=\{j:a_{i_j},a_{i_{j+1}}\in N(A)\cap A\}$. We prove that $J\subseteq \underset{v\in N(A)}\cup X_v$. Let $j\in J$. Then $n_j=i_{j+1}-i_j\in \mathscr{M}_{\Delta}$. By Observation \ref{obs1},  $n_j=i_{j+1}-i_j\in \{id(a_{i_j},a_{i_{j+1}}),\frac{n}{2}-id(a_{i_j},a_{i_{j+1}})\}$ and by Lemma \ref{lem0}, $|N(a_{i_j})\cap N(a_{i_{j+1}})|\ge 1$. Let $v\in N(a_{i_j})\cap N(a_{i_{j+1}})$. Then $a_{i_j},a_{i_{j+1}}\in N(v)\cap A$. Therefore $j\in X_v$ and $j\in \underset{v\in N(A)}\cup X_v$ that implies $J\subseteq \underset{v\in N(A)}\cup X_v$. Then $|J|\le |\underset{v\in N(A)}\cup X_v|$. Observe that $X_v=\{j:a_{i_j}\in N(v)\cap A\}-\{j:a_{i_j}\in N(v)\cap A,a_{i_{j+1}}\notin N(v)\cap A\}$, and
$|\{j:a_{i_j}\in N(v)\cap A\}|=|N(v)\cap A|=r(v)$. Since $N(v)\cap A\not\subseteq A$, we have $\{j:a_{i_j}\in N(v)\cap A,a_{i_{j+1}}\notin N(v)\cap A\}\ne \emptyset$. Therefore $ |X_v|\le r(v)-1$. Consequently, $ |J|\le|\underset{v\in N(A)}\cup X_v|\le \underset{v\in N(A)}\sum |X_v|   \le \underset{v\in N(A)}\sum [r(v)-1]$.
\end{proof}

We remark that one can define the cyclic-sequence and
index-distance for any subset of $V$ in a similar way, and thus
the Observation \ref{obs1}, Lemmas \ref{lem0} and \ref{lem00} and
corollaries  \ref{cor1} and \ref{cor2} are valid for
cyclic-sequence and index-distance on subsets of $V$ as well.

\section{Total domination number of 3-regular Kn\"odel graphs}

We are now ready to determine the total domination number of
$W_{3,n}$. Clearly $n\geq 8$ is an even integer by the
definition of $W_{3,n}$.

\begin{theorem} For each even integer $n\ge8$, $\gamma _t(W_{3,n})=4\lceil \frac{n}{10} \rceil -\left\{\begin{array}{cc}0&n\equiv0,6,8 \emph{ (mod 10)}\\2&n\equiv2,4\emph{ (mod 10)} \end{array}\right.$.

\end{theorem}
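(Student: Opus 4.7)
The plan is to prove matching lower and upper bounds.

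For the lower bound, let $D$ be any total dominating set of $W_{3,n}$, and write $D_U=D\cap U$, $D_V=D\cap V$. Since $W_{3,n}$ is bipartite, every $v\in V$ has all its neighbors in $U$, so total domination forces $V\subseteq N(D_U)$ and hence $|N(D_U)|=\tfrac{n}{2}$. Let $n_1,\ldots,n_{|D_U|}$ be the cyclic-sequence of $D_U$, let $s$ be the number of $j$ with $n_j\in\mathscr{M}_3=\{1,2,3\}$, and put $t=|D_U|-s$. Every $n_j\notin\mathscr{M}_3$ satisfies $n_j\ge 4$, so
\[\tfrac{n}{2}=\sum_{j=1}^{|D_U|} n_j \;\ge\; s+4t \;=\; 4|D_U|-3s.\]
Combined with the bound $s\le 3|D_U|-\tfrac{n}{2}$ from Lemma~\ref{lem0000}(ii), this forces $|D_U|\ge n/5$, and integrality gives $|D_U|\ge\lceil n/5\rceil$. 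Applying the same argument to the cyclic-sequence of $D_V$ (which dominates $U$) yields $|D_V|\ge\lceil n/5\rceil$, so $|D|\ge 2\lceil n/5\rceil$. A quick check in each of the five residue classes of $n\bmod 10$ shows that $2\lceil n/5\rceil$ matches the claimed value $4\lceil n/10\rceil-c$.

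For the matching upper bound I would exhibit an explicit total dominating set. The ``engine'' is that the pair $B_j=\{u_{5j+1},u_{5j+2}\}$ dominates the block $\{v_{5j+1},\ldots,v_{5j+5}\}$ of $V$ (since $u_i$ is adjacent to $v_i,v_{i+1},v_{i+3}$), and symmetrically $B'_j=\{v_{5j+2},v_{5j+3}\}$ dominates $\{u_{5j-1},\ldots,u_{5j+3}\}$. Writing $n=10k+r$ with $r\in\{0,2,4,6,8\}$, I would take
\[D_U=\Bigl(\bigcup_{j=0}^{k-1}B_j\Bigr)\cup E_U,\qquad D_V=\Bigl(\bigcup_{j=0}^{k-1}B'_j\Bigr)\cup E_V,\]
with $E_U=\emptyset$ when $r=0$, $E_U=\{u_{5k+1}\}$ when $r\in\{2,4\}$, and $E_U=\{u_{5k+1},u_{5k+2}\}$ when $r\in\{6,8\}$; $E_V$ is defined by the analogous shift in $V$. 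Inspecting the mod-$\tfrac{n}{2}$ wrap-around shows that $D_U$ covers the last $r$ indices of $V$ missed by the $B_j$'s, and similarly for $D_V$; the total-domination condition then follows from the internal edges $u_{5j+1}v_{5j+2}$ and $u_{5j+2}v_{5j+3}$ in each block, together with the analogous adjacencies at the patched end.

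The main obstacle is the case analysis in the upper bound: the five residue classes require slightly different leftover sets $E_U,E_V$, and in each case one must verify that $|D_U|=|D_V|=\lceil n/5\rceil$, that $D_U$ dominates the final $r$ indices of $V$ via the wrap-around, and that no vertex of $D$ is isolated in the induced subgraph $W_{3,n}[D]$. The lower bound, by contrast, is essentially mechanical once Lemma~\ref{lem0000}(ii) is paired with the elementary inequality $\sum_j n_j=\tfrac{n}{2}\ge s+4t$.
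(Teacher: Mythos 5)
Your lower bound is correct and is in fact a genuine streamlining of the paper's argument. The paper proves the bound by contradiction separately in each of the five residue classes: it supposes a total dominating set $D$ of size one less than the claimed value, uses the Pigeonhole Principle to assume $|D\cap U|$ is small, and then runs the counting to reach an inequality like $5t\ge 5t+5+5a$. You instead bound the two sides independently: $V\subseteq N(D\cap U)$ gives $|N(D_U)|=\frac{n}{2}$, Lemma~\ref{lem0000}(ii) gives $s\le 3|D_U|-\frac{n}{2}$, and Observation~\ref{obs1}(1) with Lemma~\ref{lem0} gives $\frac{n}{2}\ge s+4(|D_U|-s)$; eliminating $s$ yields $|D_U|\ge\lceil n/5\rceil$, likewise $|D_V|\ge\lceil n/5\rceil$, and $2\lceil n/5\rceil$ does equal the claimed value in all five classes. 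This removes both the pigeonhole step and the entire case analysis from the lower bound, using exactly the same key lemma. One point to make explicit: the paper's closing remark in Section~2 extends Observation~\ref{obs1}, Lemmas~\ref{lem0} and \ref{lem00} and the corollaries to subsets of $V$, but not Lemma~\ref{lem0000}; when you apply that lemma to $D_V$ you should invoke the $U$--$V$ swapping automorphism $u_i\mapsto v_{1-i}$, $v_j\mapsto u_{1-j}$ (indices mod $n/2$), or note that its proof carries over verbatim.

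The genuine gap is in the upper bound, at precisely the step you deferred: for $n\equiv 4\pmod{10}$ the ``analogous shift'' patch fails. Write $n=10k+4$, so $\frac{n}{2}=5k+2$. Your blocks $B'_0,\dots,B'_{k-1}$ dominate $\{u_1,\dots,u_{5k-2}\}\cup\{u_{5k+1},u_{5k+2}\}$ (the $j=0$ block wraps around), leaving exactly $u_{5k-1}$ and $u_{5k}$ uncovered. Since $v_y$ dominates $u_y,u_{y-1},u_{y-3}$, whose indices pairwise differ by $1,2,3$, and the two uncovered indices differ by $1$, the \emph{unique} single vertex dominating both is $v_{5k}$; the shifted choice $E_V=\{v_{5k+2}\}$ dominates $u_{5k+2},u_{5k+1},u_{5k-1}$ and misses $u_{5k}$. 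So the patch must be asymmetric: $E_U=\{u_{5k+1}\}$ but $E_V=\{v_{5k}\}$ (totality survives because $v_{5k}\sim u_{5k-3}\in B_{k-1}$). The other four residue classes do work with your shifted patches, so this is an isolated but real error, not a verification that can be waved through. It is worth knowing that the paper itself stumbles at this very spot: its Case~3 set $D_3$, with extra pair $\{u_{5t+1},v_{5t+2}\}$, leaves $u_{5t-2}$ undominated (for $n=14$ the vertex $u_3$ has neighbors $v_3,v_4,v_6$, none of which lie in $D_3$), and its Case~5 set $D_5$ leaves $u_{5t-2}$ and $u_{5t}$ undominated for every $t\ge1$; in both cases the $V$-side extras must be moved off the symmetric pattern, exactly as above. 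The choice of leftover sets is where the real content of the upper bound lies, and a correct write-up must verify it case by case.
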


\begin{proof}
We divide the proof into five cases depending on $n$.

\textbf{Case 1:}  $n\equiv 0$ (mod $10$). Let $n=10t$, where
$t\ge 1$. Then the set $D_1=\{u_{5k+b},v_{5k+b}: k=0,1,\cdots,
t-1 ; b=1,2  \}$ is a total dominating set for $W_{3,n}$ and thus
$\gamma _t(W_{3,n})\le |D_1|=4t=4\lceil \frac{n}{10} \rceil$. We
show that $\gamma _t(W_{3,n})= 4t$. Suppose to the contrary, that
$\gamma _t(W_{3,n})< 4t$. Let $D$ be a total dominating set with
$4t-1$ elements. Then by the Pigeonhole Principle either $|D\cap U|\le 2t-1$ or $|D\cap V|\le
2t-1$. Without loss of generality, assume that $|D\cap U|\le
2t-1$. Let $|D\cap U|=2t-1-a$, where $a\ge 0$. Then $|D\cap
V|=2t+a$. Observe that $D\cap U$ dominates at most $3|D\cap
U|=6t-3-3a$ vertices of $V$, and so $6t-3-3a\ge 5t=|V|$, since
$D\cap U$ dominates $V$. Clearly the inequality $6t-3-3a\ge 5t$
does not hold if $t\in \{1,2\}$, and thus this contradiction
implies that $\gamma _t(W_{3,n})=4t=4\lceil \frac{n}{10} \rceil$
for $t=1,2$. From here on, assume that $t\ge3$. Thus there are at
most $(6t-3-3a)- 5t=t-3-3a$ vertices of $V$ that are dominated by
at least two vertices of $D\cap U$. In the other words, by Lemma \ref{lem0000} at most
$t-3-3a$ elements of the cyclic-sequence of $D\cap U$ belong to
$\mathscr{M}_3=\{1,2,3\}$. Furthermore, at least
$(2t-1-a)-(t-3-3a)=t+2+2a$ elements of the cyclic-sequence of
$D\cap U$ are greater than 3 (do not belong to $\mathscr{M}_3$ by
Lemma \ref{lem0}). Then by Observation \ref{obs1},
$5t=\underset{i=1}{\overset{2t-1}\Sigma} n_i\ge
4(t+2+2a)+(t-3-3a)=5t+5+5a$, a contradiction. Therefore, $\gamma
_t(W_{3,n})=4t=4\lceil \frac{n}{10} \rceil$.

\textbf{Case 2:}  $n\equiv 2$ (mod $10$). Let $n=10t+2$, where
$t\ge 1$. Then the set $D_2=\{u_{5k+b},v_{5k+b}: k=0,1,\cdots,t-1
; b=1,2  \}\cup \{u_{5t+1},v_{5t+1}\}$ is a total dominating set
for $W_{3,n}$ and thus $\gamma _t(W_{3,n})\le |D_2|=4t+2=4\lceil
\frac{n}{10} \rceil-2$. We show that $\gamma _t(W_{3,n})= 4t+2$.
Suppose to the contrary, that $\gamma _t(W_{3,n})< 4t+2$. Let $D$
be a total dominating set with $4t+1$ elements. Then by the Pigeonhole Principle either
$|D\cap U|\le 2t$ or $|D\cap V|\le 2t$. Without loss of
generality, assume that $|D\cap U|\le 2t$. Let $|D\cap U|=2t-a$,
where $a\ge 0$. Then $|D\cap V|=2t+1+a$. Observe that $D\cap U$
dominates at most $6t-3a$ vertices of $V$ and so  $6t-3a\ge
5t+1=|V|$, since $D\cap U$ dominates $V$. Then there are at most
$(6t-3a)- (5t+1)=t-1-3a$ vertices of $V$ that are  dominated by
at least two vertices of $D\cap U$. By Lemma \ref{lem0000}, at most $t-1-3a$ elements of the cyclic-sequence of $D\cap U$ belong to
$\mathscr{M}_3$. Furthermore, at least
$(2t-a)-(t-1-3a)=t+1+2a$ elements of the cyclic-sequence of
$D\cap U$ are greater than 3 (do not belong to $\mathscr{M}_3$ by
Lemma \ref{lem0}). Then by Observation \ref{obs1},
$5t+1=\underset{i=1}{\overset{2t-a}\Sigma} n_i\ge
4(t+1+2a)+(t-1-3a)=5t+3+5a$, a contradiction. Therefore, $\gamma
_t(W_{3,n})=4t+2=4\lceil \frac{n}{10} \rceil-2$.

\textbf{Case 3:}  $n\equiv 4$ (mod $10$). Let $n=10t+4$, where
$t\ge 1$. Then the set $D_3=\{u_{5k+b},v_{5k+b}: k=0,1,\cdots,t-1
; b=1,2  \}\cup \{u_{5t+1},v_{5t+2}\}$ is a total dominating set
for $W_{3,n}$ and thus $\gamma _t(W_{3,n})\le |D_3|=4t+2=4\lceil
\frac{n}{10} \rceil-2$. We show that $\gamma _t(W_{3,n})=4t+2$.
Suppose to the contrary, that $\gamma _t(W_{3,n})< 4t+2$. Let $D$
be a total dominating set with $4t+1$ elements. Then by the Pigeonhole Principle either
$|D\cap U|\le 2t$ or $|D\cap V|\le 2t$. Without loss of
generality, assume that $|D\cap U|\le 2t$. Let $|D\cap U|=2t-a$,
where $a\ge 0$. Then $|D\cap V|=2t+1+a$. Observe that $D\cap U$
dominates at most $6t-3a$ vertices of $V$, and so $6t-3a\ge
5t+2=|V|$, since $D\cap U$ dominates $V$. Clearly the inequality $6t-3a\ge 5t+2$
does not hold if $t=1$, and thus this contradiction
implies that $\gamma _t(W_{3,n})=4t+2=4\lceil \frac{n}{10} \rceil-2$
for $t=1$. From here on, assume that $t\ge2$. Then there are at most
$(6t-3a)- (5t+2)=t-2-3a$ vertices of $V$ that are dominated by at
least two vertices of $D\cap U$. By Lemma \ref{lem0000}, at most $t-2-3a$ elements of the cyclic-sequence of $D\cap U$ belong to
$\mathscr{M}_3$. Furthermore, at least
$(2t-a)-(t-2-3a)=t+2+2a$ elements of the cyclic-sequence of
$D\cap U$ are greater than 3 (do not belong to $\mathscr{M}_3$ by
Lemma \ref{lem0}). Then by Observation \ref{obs1},
$5t+2=\underset{i=1}{\overset{2t-a}\Sigma} n_i\ge
4(t+2+2a)+(t-2-3a)=5t+6+5a$, a contradiction. Therefore, $\gamma
_t(W_{3,n})=4t+2=4\lceil \frac{n}{10} \rceil-2$.

\textbf{Case 4:}  $n\equiv 6$ (mod 10). Let $n=10t+6$, where
$t\ge 1$. Then the set $D_4=\{u_{5k+b},v_{5k+b}: k=0,1,\cdots,t-1
; b=1,2  \}\cup \{u_{5t+1},v_{5t+1},u_{5t+2},v_{5t+3}\}$ is a
total dominating set for $W_{3,n}$ and thus $\gamma _t(W_{3,n})\le
|D_4|=4t+4=4\lceil \frac{n}{10} \rceil$. We show that $\gamma
_t(W_{3,n})=4t+4$. Suppose to the contrary, that  $\gamma
_t(W_{3,n})< 4t+4$. Let $D$ be a total dominating set with $4t+3$
elements. Then by the Pigeonhole Principle either $|D\cap U|\le 2t+1$ or $|D\cap V|\le 2t+1$.
Without loss of generality, assume that $|D\cap U|\le 2t+1$. Let
$|D\cap U|=2t+1-a$, where $a\ge 0$. Then $|D\cap V|=2t+2+a$.
Observe that $D\cap U$ dominates at most $6t+3-3a$ vertices of
$V$, and so $6t+3-3a\ge 5t+3=|V|$, since $D\cap U$ dominates $V$.
Then there are at most $(6t+3-3a)- (5t+3)=t-3a$ vertices of $V$
that are dominated by at least two vertices of $D\cap U$. By Lemma \ref{lem0000}, at most $t-3a$ elements of the cyclic-sequence of $D\cap U$ belong to
$\mathscr{M}_3$. Furthermore, at least $(2t+1-a)-(t-3a)=t+1+2a$ elements of the
cyclic-sequence of $D\cap U$ are greater than 3 (do not belong to
$\mathscr{M}_3$ by Lemma \ref{lem0}). Then by Observation
\ref{obs1},  $5t+3=\underset{i=1}{\overset{2t+1-a}\Sigma} n_i\ge
4(t+2+2a)+(t-3a)=5t+8+5a$, a contradiction. Therefore, $\gamma
_t(W_{3,n})=4t+4=4\lceil \frac{n}{10} \rceil$.

\textbf{Case 5:}  $n\equiv 8$ (mod 10). Let $n=10t+8$, where
$t\ge 0$. Then the set $D_5=\{u_{5k+b},v_{5k+b}: k=0,1,\cdots,t-1
; b=1,2  \}\cup \{u_{5t+1},v_{5t+2},u_{5t+3},v_{5t+4}\}$ is a
total dominating set for $W_{3,n}$ and thus $\gamma _t(W_{3,n})\le
|D_5|=4t+4=4\lceil \frac{n}{10} \rceil$. We show that $\gamma
_t(W_{3,n})=4t+4$. Suppose to the contrary, that $\gamma
_t(W_{3,n})< 4t+4$. Let $D$ be a total dominating set with $4t+3$
elements. Then by the Pigeonhole Principle either $|D\cap U|\le 2t+1$ or $|D\cap V|\le 2t+1$.
Without loss of generality, assume that $|D\cap U|\le 2t+1$. Let
$|D\cap U|=2t+1-a$ and $a\ge 0$. Then $|D\cap V|=2t+2+a$. Observe
that $D\cap U$ dominates at most $6t+3-3a$ vertices of $V$, and
so $6t+3-3a\ge 5t+4=|V|$, since $D\cap U$ dominates $V$. Then
there are at most $(6t+3-3a)- (5t+4)=t-1-3a$ vertices of $V$ that
are dominated by at least two vertices of $D\cap U$. By Lemma \ref{lem0000}, at most $t-1-3a$ elements of the cyclic-sequence of $D\cap U$ belong to
$\mathscr{M}_3$. Furthermore,
at least  $(2t+1-a)-(t-1-3a)=t+2+2a$ elements of the
cyclic-sequence of $D\cap U$ are greater than 3 (do not belong to
$\mathscr{M}_3$ by Lemma \ref{lem0}). Then by Observation
\ref{obs1},  $5t+4=\underset{i=1}{\overset{2t+1-a}\Sigma} n_i\ge
4(t+2+2a)+(t-1-3a)=5t+7+5a$, a contradiction. Therefore $\gamma
_t(W_{3,n})=4t+2=4\lceil \frac{n}{10} \rceil$.
\end{proof}

\section{Conclusion}
Domination number and total domination number of the $3$-regular
Kn\"odel graphs have already been determined. Determining other
variations of domination (such as connected domination number,
independent domination number, etc.) on these graphs seems of
sufficient interest. Moreover, determining the domination variants
of the $k$-regular Kn\"odel graphs for $k\geq 4$ are still open.

\end{document}